
\documentclass[12pt,leqno,amscd,verbatim, url]{amsart}
\usepackage{amsfonts,latexsym}
\usepackage{amssymb,amsmath,amsthm,amscd,latexsym,amsfonts,bezier}
\usepackage{amsmath,amssymb}
\usepackage[latin1]{inputenc}
\usepackage{colortbl}
\usepackage[english]{babel}
\usepackage{amsmath}
\usepackage{amsfonts}
\usepackage{amssymb}
\usepackage{graphicx}%
\setcounter{MaxMatrixCols}{30}
\oddsidemargin .2in
\evensidemargin .2in
\textwidth 6.1in
\newtheorem{theorem}{Theorem}

\newtheorem{corollary}[theorem]{Corollary}

\newtheorem{lemma}[theorem]{Lemma}

\newtheorem{proposition}[theorem]{Proposition}
\newtheorem{remark}[theorem]{Remark}

\newcommand{\RInd}{\rm{RInd}\,}
\newcommand{\Res}{\rm{Res}\,}

\begin{document}

\dedicatory{\it We dedicate this paper to Michel Brou\'{e}.}
\title{Truncation and the Induction Theorem}
\author{Terrell L. Hodge}
\address{Department of Mathematics, Western Michigan University, Kalamazaoo, MI 49008}
\email{terrell.hodge@wmich.edu}
\author{Paramasamy Karuppuchamy}
\address{Department of Mathematics, University of Toledo, Toledo, OH 43606}
\email{paramasamy.karuppuchamy@utoledo.edu}
\author{Leonard L. Scott}
\address{Department of Mathematics, University of Virginia, Charlottesville, VA 22903}
\email{lls2l@virginia.edu}
\thanks{Research supported in part by NSF grant DMS-1001900 and Simons Foundation Collaborative
Research award $\#$359363.}
\thanks{This work, an accepted manuscript form, is licensed under the Creative Commons Attribution-NonCommercial-NoDerivatives 4.0 International License. To view a copy of this license, visit {\tt http://creativecommons.org/licenses/by-nc-nd/4.0/} or send a letter to Creative Commons, PO Box 1866, Mountain View, CA 94042, USA.  The associated published article in press is: T.L. Hodge et al., Truncation and the Induction Theorem, J. Algebra
(2020), {\tt https://doi.org/10.1016/j.jalgebra.2020.02.004}.}
 \subjclass[2000]{Primary 17B55, 20G;
Secondary 17B50; Keywords: quantum enveloping algebras, algebraic groups, derived categories,  highest weight categories,  poset orders, truncated induction,  representations in characteristic $p$}  
\date{February 15, 2020}

\begin{abstract}
A key result in a 2004 paper by S. Arkhipov, R. Bezrukavnikov, and
V. Ginzburg compares the bounded derived category
$D^{b}(block(\mathbb{U}))$ of finite dimensional modules for the
principal block of a Lusztig quantum enveloping algebra
$\mathbb{U}$ at an $\ell^{\text{th}}$ root of unity with a special full subcategory $D_{triv}%
(\mathbb{B})$ of the bounded derived category of integrable type 1
modules for a Borel part $\mathbb{B}\subset\mathbb{U}.$
Specifically, according to this \textquotedblleft Induction
Theorem\textquotedblright\ \cite[Theorem
3.5.5]{ABG04} the right derived functor of induction $\mbox{Ind}_{\mathbb{B}%
}^{\mathbb{U}}\,$ yields an equivalence of categories $\mbox{RInd}_{\mathbb{B}%
}^{\mathbb{U}}\,:D_{triv}(\mathbb{B})\overset{\sim}{\rightarrow}%
D^{b}(block(\mathbb{U})).$ Some restrictions on $\ell$ are
required--e.g., $\ell >h$, the Coxeter number.  It is suggested
briefly \cite[Remark 3.5.6]{ABG04} that an analog of this
equivalence carries over to characteristic $p>0$ representations of
algebraic groups. Indeed, the authors of the present paper have
verified, in a separate preprint \cite{HKS16}, that there is such an
equivalence
$\mbox{RInd}_{B}^{G}:D_{triv}(B)\overset{\sim}{\rightarrow
}D^{b}(block(G))$ relating an analog of $D_{triv}(\mathbb{B}),$
defined using a Borel subgroup $B$ of a simply connected semisimple
algebraic group $G$, to the bounded derived category of the
principal block of finite dimensional rational $G$-modules. The
proof is not without difficulty and supplies new, previously missing
details even in the quantum case. The present paper continues the
study of the modular case, taking the derived category equivalence
as a starting point. The main result here is that, assuming
$p>2h-2$, the equivalence behaves well with respect to certain
weight poset ``truncations," making use of a variation by Woodcock
\cite{W97} on van der Kallen's \textquotedblleft excellent
order\textquotedblright\ \cite{vdK89}. This means, in particular,
that the equivalence can be reformulated in terms of derived
categories of finite dimensional quasi-hereditary algebras. We
expect that a similar result holds in the quantum case.

In \cite[Thm. 2.1]{R94} J. Rickard proves a theorem stating existence, in an
algebraic groups context, of some derived equivalences with specified corresponding
character isometries. In an appendix to this paper, we discuss a similar more recent result
\cite[Lem. 3.2]{HKS16}.  It gives additional information on the behavior of
the derived equivalences, giving their action on some natural
right-derived induced objects, lifting the given character isometries.

\end{abstract}
\maketitle


\section{Introduction}
Suppose $G$ is a semisimple, simply connected algebraic group
defined and split over $\mathbb{F}_p,$ with Coxeter number $h$ and
Borel subgroup $B.$ Take $\mathcal{C}_{G\text{ }}^{f}$ to be the
finite dimensional rational modules for $G,$ and within it,
$block(G)$, to be the principal block of $G.$

It had been known for some time that
$block(G)\subseteq\mathcal{C}_{G\text{ }}^{f}$ fully embeds, via
module restriction  $\Res^G_B$, into the category $\mathcal{C}_{B}^{f}$ of finite
dimensional rational $B-$modules. However, while there are some
abstract characterizations in \cite{PSW00}, there is no known
explicit description of 
 $\Res^G_B(block(G))$ in terms of
$B$-modules. The full embedding of $block(G)$ via $\Res^G_B$ even
 yields a full embedding of $D^b(block(G))$ in $D^b(\mathcal{C}_{B}^{f}),$ for the bounded derived categories
of $block(G)$ and $\mathcal{C}_{B}^{f}$ (\cite{CPSvdK77}); there
again, the question of how  to explicitly describe the image 
of $D^b(block(G))$ in $D^b(\mathcal{C}_B^{f})$ remains
open. 
Nevertheless, in contrast, an explicit description of $D^b(block(G))$ sitting inside 
$D^{b}(\mathcal{C}_{B\text{ }}^{f})$ does exist;  as recorded in Theorem  \ref{thm:pinduction} below  and first suggested by \cite{ABG04},  it is given by a triangulated subcategory $D_{triv}(B)$ of 
$D^{b}(\mathcal{C}_{B\text{ }}^{f})$. A
thematic feature of the approach taken in \cite{ABG04} for quantum groups, and adapted
to the characteristic $p$ algebraic groups case in \cite{HKS16},  is
to focus on the right derived functor $\RInd$ of induction, that is, to focus on the right adjoint of restriction, rather than restriction itself.

Specifically, within $D^{b}(\mathcal{C}_{B\text{ }}^{f}),$ take $D_{triv}(B)$ to be the full triangulated subcategory consisting
precisely of all objects having cohomology both of finite dimension and also
with weights all expressible in the form $p\lambda$ for some weight $\lambda$ in the root lattice. The following theorem is proved in
\cite{HKS16}.

\begin{theorem}  \label{thm:pinduction} 
Assume $p>h.$ Then the functor $\RInd_{B}^{G}\,$ induces an
equivalence of triangulated categories
\[
D_{triv}(B)\overset{\sim}{\longrightarrow}  D^{b}(block(G))\text{.}%
\]

\end{theorem}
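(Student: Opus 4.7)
The plan is to mimic the structure of the Arkhipov--Bezrukavnikov--Ginzburg argument for \cite[Thm 3.5.5]{ABG04}, replacing the small quantum group by the Frobenius kernel $G_1$ and substituting Kempf vanishing together with the Andersen--Jantzen strong linkage principle for the quantum Koszul-duality inputs. The work splits naturally into three tasks: checking that $\mbox{RInd}_B^G$ actually sends $D_{triv}(B)$ into $D^b(block(G))$, establishing full faithfulness, and exhibiting essential surjectivity.

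\textbf{Step 1 (the functor is well-defined on $D_{triv}(B)$).} Given $X\in D_{triv}(B)$, one needs $\mbox{RInd}_B^G X$ to have only finitely many nonzero cohomologies, each finite dimensional and lying in $block(G)$. A hyper-cohomology spectral sequence reduces this to $X=k_{p\lambda}$ with $p\lambda$ in the root lattice, whereupon $\mbox{RInd}_B^G k_{p\lambda} = H^{\bullet}(G/B,\mathcal{L}_{p\lambda})$ is bounded and finite dimensional by Kempf vanishing and Serre duality, and the composition factors $L(\mu)$ obey $\mu\uparrow p\lambda$, forcing $\mu$ into the affine Weyl group orbit of $0$ and hence into the principal block, thanks to $p>h$.

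\textbf{Step 2 (full faithfulness).} The category $D_{triv}(B)$ is generated as a triangulated subcategory by the one-dimensional modules $k_{p\lambda}$, $p\lambda$ in the root lattice, together with their shifts, so it is enough to show that for all such $\lambda,\mu$ the natural map
\[
\mbox{RHom}_{\mathcal{C}_B}\!\bigl(k_{p\lambda},k_{p\mu}\bigr)\;\longrightarrow\;\mbox{RHom}_{\mathcal{C}_G}\!\bigl(\mbox{RInd}_B^G k_{p\lambda},\,\mbox{RInd}_B^G k_{p\mu}\bigr)
\]
is an isomorphism. By Frobenius reciprocity the right-hand side is $\mbox{RHom}_B(k_{p\lambda},\mbox{RInd}_B^G k_{p\mu})$, so one must show that the only $B$-extensions between $k_{p\lambda}$ and the complex $\mbox{RInd}_B^G k_{p\mu}$ are the trivial identity contribution when $\lambda=\mu$. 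This is a weight-multiplicity calculation: all weights of $\mbox{RInd}_B^G k_{p\mu}$ lie in $p\cdot X(T)$, and the Andersen--Jantzen filtration on $H^{\bullet}(G/B,\mathcal{L}_{p\mu})$ combined with Weyl character data, under the hypothesis $p>h$, carries out the count.

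\textbf{Step 3 (essential surjectivity), the main obstacle.} One has to show every object of $D^b(block(G))$ lies in the essential image. The cleanest route is to display a set of generators of $D^b(block(G))$ --- for instance the simples $L(\lambda)$, or the baby Vermas, or the indecomposable tiltings in the principal block --- as images of explicit complexes under $\mbox{RInd}_B^G$. In the quantum case ABG achieve this via a Koszul-type resolution built from weight-line complexes with weights in $p\cdot X(T)$. Transporting such resolutions to the modular setting is where the argument is most delicate, since modular Koszul duality for the principal block is subtler than the quantum counterpart; one must lean on the $p>h$ hypothesis, the Frobenius-twist decomposition of weights in $p\cdot X(T)$, and detailed control of $G_1$-cohomology to construct the required BGG-style resolutions. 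Once generators are realized, closure under triangles, shifts, and direct summands combined with Step 2 upgrades fully-faithful-on-generators to an equivalence, completing the theorem.
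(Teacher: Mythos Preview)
The paper does not actually prove this theorem. It is quoted from \cite[Remark 3.5.6]{ABG04}, and the authors explicitly defer the detailed characteristic-$p$ argument to a separate preprint \cite{HKS11}; see the abstract (``In the present paper we assume this equivalence'') and the remark preceding the main theorem (``here we quote the theorem''). So there is no in-paper proof to compare your proposal against. That said, your three-step outline (well-definedness, full faithfulness, essential surjectivity) is indeed the general ABG strategy, and is presumably what \cite{HKS11} carries out.

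Your Step~2, however, contains concrete errors rather than merely sketchy passages. First, with $\mbox{Ind}_B^G$ right adjoint to restriction (as in Jantzen's conventions used here), Frobenius reciprocity yields
\[
\mbox{RHom}_G\bigl(\mbox{RInd}_B^G k_{p\lambda},\,\mbox{RInd}_B^G k_{p\mu}\bigr)\;\cong\;\mbox{RHom}_B\bigl((\mbox{RInd}_B^G k_{p\lambda})\vert_B,\,k_{p\mu}\bigr),
\]
not $\mbox{RHom}_B(k_{p\lambda},\mbox{RInd}_B^G k_{p\mu})$; you have the adjunction reversed. Second, the claim that ``all weights of $\mbox{RInd}_B^G k_{p\mu}$ lie in $p\cdot X(T)$'' is false: for $p\mu$ dominant one has $H^0(G/B,\mathcal{L}_{p\mu})=\nabla(p\mu)$, whose $T$-weights are all weights $\le p\mu$ in the Weyl character, certainly not all $p$-multiples. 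Third, the assertion that the only $B$-extensions showing up are ``the trivial identity contribution when $\lambda=\mu$'' cannot be right: $\mbox{Ext}^\ast_B(k_{p\lambda},k_{p\mu})$ is highly nontrivial for $\lambda\neq\mu$ (it computes cohomology of the unipotent radical with a twist), and the content of full faithfulness is precisely that these nontrivial $B$-Ext groups are \emph{matched} by the $G$-side, not that both sides vanish. The actual ABG argument for full faithfulness is considerably more structural, passing through the Frobenius kernel and a formality/Koszulity statement; your weight-counting substitute does not survive scrutiny.
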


In particular, $D^b(block(G))$ is the strict image of $D_{triv}(B)$ under the
induction functor $\mbox{Rind}_{B}^{G}.$
We mention that the functor giving the equivalence in Theorem \ref{thm:pinduction}
is
obtained from the induction functor $\mbox{RInd}_{B}^{G}:D^{b}(\mathcal{C}%
_{B\text{ }})\rightarrow D^{b}(\mathcal{C}_{G\text{ }})$ by restricting its
domain and range. \footnote { Here we have removed the superscript $\ f$ \ from
$\mathcal{C}_{B\text{ }}^{f}$ and $\mathcal{C}_{G}^{f}$ to indicate
categories of all (possibly infinite dimensional) rational $G-$modules, to
allow the standard construction of $\mbox{RInd}_{B}^{G}$ using complexes of
injective modules. }

\medskip

Arkhipov, Bezrukavnikov, and Ginzburg first studied such remarkable properties of induction in the context of a quantum enveloping algebra at an $\ell^{\text{th}}$ root of unity. The statement of Theorem 1 is modeled on their quantum ``Induction Theorem" \cite[Thm 3.5.5]{ABG04} by   replacing
the quantum enveloping algebras associated with the groups
$B$ and $G$ by the groups themselves, and
replacing $\ell$  by $p$. (There are several conditions on $\ell$ in the quantum case, but they reduce to the requirement $\ell>h$ when $\ell$ is a prime.)  It seems the authors of \cite{ABG04} believed 
Theorem \ref{thm:pinduction} to be true, and that some of the
ingredients of their proof of its quantum analog \cite[Thm 3.5.5]{ABG04} could be applied to its proof. This much, at least, is
confirmed by the  proof of Theorem \ref{thm:pinduction} in
\cite{HKS16}, though new ingredients beyond the proof of \cite[Thm
3.5.5]{ABG04} are also required.\footnote{It should also be mentioned
that the proof in \cite{ABG04} for the quantum case \cite[Thm
3.5.5]{ABG04} appears to be incomplete, in the sense that the proof
of one key lemma, \cite[Lem. 4.1.1(ii)]{ABG04}, seems inadequate.
Fortunately, the lengthy argument for a corresponding result
\cite[Lem. 3.2(ii)]{HKS16} applies in both the modular and quantum cases.
This lemma is stated in the appendix to the present paper.}

The central focus of the paper in hand is the following finiteness property, a collateral discovery from
 the 
process of finding a proof of Theorem
\ref{thm:pinduction}:  
When also $p\geq2h-2$ 
(a condition equivalent to $p>2h-2$ when $p>h$),
the equivalence in Theorem \ref{thm:pinduction}  can be written as a
union of equivalences of triangulated categories associated to
highest weight categories in the sense of \cite{CPS88}, each having
a finite weight poset.

To give some detail,
 for each positive integer $m,$ we will
set $\Lambda_{m}$ to be a particular subset in the root lattice
$\mathbb{Y}$ of $G;$ then let $\Gamma_m = \Lambda_m \cap (W_p\cdot
0)^+$ be those elements of $\Lambda_m$ that are among the dominant
weights $(W_p\cdot 0)^+$ indexing the irreducible modules in
$block(G).$ The set $\Gamma_m$ turns out to be a poset ideal in
$(W_p.0)^+$ with respect to the dominance order, and we let
$block(G)_{\Gamma_m}$ be the image of $block(G)$ under the
associated standard dominant weight poset truncation by $\Gamma_m.$
The subset $\Lambda_{m}$ turns out also to be a weight poset, but
with respect to an order associated to both the ``excellent order''
and ``antipodal excellent order"  highest weight categories of
$B$-modules \cite{W97}, with terminology introduced (with different
weight posets) by van der Kallen \cite{vdK89}, \cite{vdK93}. There
is a corresponding associated notion of poset truncation for the
distribution algebra $Dist(B)$ of $B,$ relative to $\Lambda_m.$
Using these notions, we can formally state the main result of this paper:

\begin{theorem} \label{thm:truncation1}
Under the assumption $p>2h-2,$ the equivalence $\RInd_{B}^{G}\,:$ $D_{triv}%
(B)\rightarrow D^{b}(block(G))$ of Theorem  \ref{thm:pinduction} induces, for each integer
$m>0$, equivalences of full triangulated subcategories%
\[
D_{triv}(Dist(B)_{\Lambda_{m}})\overset{\sim}{\longrightarrow} D^{b}(block(G)_{\Gamma_{m}}).
\]
Also, $D_{triv}(B)$ is naturally equivalent to the directed union of
its full triangulated subcategories
$D_{triv}(Dist(B)_{\Lambda_{m}}), \,m>0$ an integer, and
$D^{b}(block(G))$ is similarly naturally equivalent to the directed
union of the various $D^{b}(block(G)_{\Gamma_{m}}).$ 
\end{theorem}

\noindent We remark that for each positive integer $m,$ the categories $Dist(B)_{\Lambda_m}$ and $block(G)_{\Gamma_{m}}$ are
equivalent to categories of finite dimensional modules for finite dimensional quasi-hereditary algebras.
We believe this ``finiteness property'' of the equivalence in Theorem \ref{thm:pinduction}, as given by Theorem \ref{thm:truncation1},
is of sufficient significance to
be worth recording on its own.

\bigskip

The current version of \cite{HKS16}, a project of several years,  was posted in early 2016. A few weeks later,
at the AIM workshop, ``Sheaves and Representations of Reductive Groups,''  Geordie Williamson
took note of the paper as having ``revisited ABG'' in remarks to the workshop participants. He also remarked, in effect,  that the formal similarities of the paper's main result (see Theorem 1 above)   could be of interest to researchers studying  the Brou\'{e} abelian defect group conjecture in modular representations of finite groups. It was/is a reasonable comment, especially in view of \cite{ChR08}, which introduced an approach to derived category equivalences, proving with it both the symmetric groups case of the Brou\'{e} conjecture, and also proving, in the type A case, a conjecture  of Rickard for reductive algebraic groups. While this confluence was surprising at the time, Rickard does say  in his reductive groups paper \cite[p.256]{R94} that he was motivated by symmetric groups. He gave no obvious  parallel to the abelian defect group conjecture,  but did present a  conjecture on interesting derived equivalences which might occur, for all types, for reductive algebraic groups. He proved it  in the regular weight case \cite[Thm. 2.1]{R94}.  We observed some time after the AIM conference that  \cite[Lem. 3.2]{HKS16} already  fits very well into this discussion, providing additional information on the derived equivalences studied in \cite[Thm. 2.1]{R94}  by Rickard. We include a discussion of  \cite[Lem. 3.2]{HKS16}, stated in the algebraic groups case as Lemma \ref{wallcrossingfunctors} in an appendix to this paper, in the hope of sparking further interest in this line of investigation.
\bigskip

\section{Notation, Conventions, and Other Preliminaries}
\subsection{Basics}
We provide below the list of notation
 for this paper. Most of the less standard terminology is
 repeated in the main text.

\begin{itemize}
\item $p$~ a prime
\item $G$~ a semisimple, simply connected algebraic group defined and split over $\mathbb{F}_p$
\item $R$~ fixed root system for $G;$ with $R = R^- \cup R^+$ for a fixed set of negative (resp., positive) roots $R^-$ (resp., $R^+$)
\item $W$~ Weyl group of $G$ w.r.t. $R$ (more precisely, there is split maximal torus which determines both $W$ and $R$)
\item $W_p$~ (also denoted $W_{aff}$) the affine Weyl group $W_p \cong p\mathbb{Z}R \rtimes W$ of $G$ with respect to $R$
\item $h$~ Coxeter number for $G$
\item $B = B^{-} \subset G$~ Borel subgroup associated to $R^-$ (resp., $B^+$ positive Borel associated to $R^+$)
\item $\mathbb{X}$~ weight lattice for $G$
\item $\mathbb{X}^+$~ dominant weights, with the usual dominance order denoted  $\leq$, and also used on $\mathbb{X}$
\item $\mathbb{Y}$~ root lattice for $G$
\item $\Lambda_m$~ a subset of $\mathbb{Y}$ defined in Proposition 3
\item $\Gamma_m$~ a subset of  $\Lambda_m$ defined in Proposition 4 

\item $\rho = \frac{1}{2}\Sigma_{\alpha\in R^+}\alpha$ 
\item $w\cdot \lambda:= w(\lambda + \rho) - \rho,$~ defines the ``dot action'' of $W$ on $\lambda$ for $\lambda \in \mathbb{X}$
\item $w_o\in W,$~ the unique element of longest length in $W$
\item $x^{+}$ (resp., $x^{-}$)~ the unique dominant (resp., antidominant) weight in the $W$-orbit $Wx$ of $x\in \mathbb{X}$
\item $\preceq$~ a partial order defined on $\mathbb{X}$ by $x\preceq x^{\prime}$ if either the condition $x^{+}<x^{\prime+}$ holds (in the usual
dominance order), or else $x=x'$ and $w\leq w^{\prime}$~ (in the
Bruhat-Chevalley order), where $w,w^{\prime}$ $\in W$ are the unique
elements of minimum length with $x=wx^{-}$ and
$x^{\prime}=w^{\prime}{x^{\prime}}^{-} (=w^{\prime}x^{-})$
\item $x\preceq^{\circ}x^{\prime}$~ a partial order defined on $\mathbb{X}$ by $x\preceq^{\circ}x^{\prime}$ iff $w_{0}x\preceq w_{0}x^{\prime}$

\item  $\mathcal{C}_G$ (resp., $\mathcal{C}_B$)~ category of rational $G$-modules (resp., rational $B$-modules)
\item $\mathcal{C}^{0}_G\subseteq \mathcal{C}_G$~ full subcategory of rational
$G$-modules with all composition factors having highest weights in $(W_p \cdot 0)^+$~ 
\item $\mathcal{C}^{0}_B\subseteq \mathcal{C}_B$~ full subcategory of rational $B$-modules with weights in $\mathbb{Y}$
\item $\mathcal{C}^f_G$ (reps., $\mathcal{C}^f_B$)~ category of finite dimensional rational $G$-modules (resp., finite dimensional rational $B$-modules)
\item $\mathcal{C}^{0,f}(G) \subseteq \mathcal{C}^{f}_{G}$~ the full subcategory consisting of all finite dimensional rational $G-$modules in $\mathcal{C}^{0}_G$;~ by
definition, this is, the ``principal block" $block(G)$ of $G$, and so we also write $block(G)  =
\mathcal{C}^{0,f}(G)$
\item $\mathcal{C}^{0,f}(B) \subseteq \mathcal{C}^{f}_{B}$~ the full subcategory consisting of all finite dimensional rational $B-$modules
for which the weights are in $\mathbb{Y};$ thus $\mathcal{C}^{0,f}(B) = block(B),$ the principal block of $B$ (although we shall not explicitly use this
latter fact)
\item $D^b(\mathcal{A})$~ the bounded derived category of an appropriate category $\mathcal{A}$ 
\item $D_{triv}(B) \subset D^b(\mathcal{C}^{f}_B)$~ the full triangulated subcategory consisting
precisely of all objects having cohomology both of finite dimension and also
with weights all expressible in the form $p\lambda$ for some weight $\lambda$ in $\mathbb{Y}$

\item $\mathcal{C}_{B\text{ }}%
^{0,f}[\Lambda]\subseteq$~ \ $\mathcal{C}_{B\text{ }}^{0}$,~ the
full subcategory of all finite-dimensional objects whose weights all
belong to $\Lambda,$~ for any $\Lambda\subseteq \mathbb{Y}$ a finite
poset ideal (with respect to either order $\preceq$ or
$\preceq^{\circ}$)

\item $\mathcal{C}_{G}^{0,f}[\Gamma]$~ the full subcategory of $\mathcal{C}_{G}^{0,f}$ consisting of those
objects whose composition factors all have highest weights in $\Gamma,$ for $\Gamma\subseteq(W_{aff}\cdot0)^{+}=(W_{p}%
\cdot0)^{+}$~ a finite poset ideal under the dominance order.
\end{itemize}

\subsection{Highest Weight Categories, Poset Orders, and Truncation}
The category
$\mathcal{C}_{B\text{ }}$ of all rational $B-$modules 
is a highest weight category with respect to either the ``excellent"
or ``antipodal excellent" partial orders on weights in \cite{vdK89}. 
Following \cite{W97}, we will use variations, respectively denoted
here by $\preceq$ and $\preceq^{\circ}$, of these orders, which \
give essentially the same \footnote{This fact, implicitly suggested
in \cite{W97} and used in \cite{PSW00}, does not seem to be
explicitly proved in the literature. We intend to later include a proof, in a paper in preparation (with Brian Parshall and some or all of us) on quantum
analogs of the results in this paper. In the meantime, the reader
can simply rely on \cite{W97} for the fact that these orders each
define highest weight categories.} respective highest weight
category structures (give the same costandard and standard modules).
More precisely, for $x\in\mathbb{X,}$ let $x^{+}$ denote the unique
dominant member of its (undotted) Weyl group orbit, and let $x^{-}$
denote the unique antidominant member of this orbit. Then $x\preceq
x^{\prime}$ is defined, for $x,x^{\prime}\in\mathbb{X,}$ to mean
that either the condition $x^{+}<x^{\prime+}$ holds (in the usual
dominance order), or else $x^{+}=x^{\prime+}$ and $w\leq w^{\prime}$ (in the
Bruhat-Chevalley order), where $w,w^{\prime}$ $\in W$ are the unique
elements of minimum length with $x=wx^{-}$ and
$x^{\prime}=w^{\prime}x^{-}$. \ (Notice that $x^{+}=x^{\prime +}$ $\
$implies $x^{-}=x^{\prime-}$.) Define also
$x\preceq^{\circ}x^{\prime}$ iff
$w_{0}x\preceq w_{0}x^{\prime}$. In particular, the action of
$w_{0}$ interchanges $\preceq$ and $\preceq^{\circ}$.  Both orders
can be used for either $\mathcal{C}_{B\text{ }}$ or
$\mathcal{C}_{B^{+}\text{ }}$. The latter category was used in
\cite{vdK89}, but \cite{W97} uses the former, as we do
here.\footnote{It is instructive to note that \cite{W97} refers to
$\preceq$
 as the ``antipodal excellent order" and to
$\preceq^{\circ}$ as the ``excellent" order, terminology choices
which readers familiar with \cite{vdK93} might expect to have been
reversed. The apparent explanation is that \cite{W97} prefers $B$ (which has ``negative" root groups) to
$B^{+}$. Conjugation by $w_0$ carries $B$ to $B^{+}$ and $\preceq$
to $\preceq^{\circ}$.  Thus, Woodcock does appear to be trying to align
\cite{W97} with \cite{vdK93}.}
 In fact, we use
$\mathcal{C}_{B\text{ }}^{0}$, which inherits a highest weight
category structure from $\mathcal{C}_{B\text{ }}$.

Similarly, if
$\Lambda\subseteq \mathbb{Y}$ is a finite poset ideal (with respect
to
either order), then the full subcategory $\mathcal{C}_{B\text{ }}%
^{0,f}[\Lambda]\subseteq$ \ $\mathcal{C}_{B\text{ }}^{0}$, of all
finite-dimensional objects whose weights all belong to $\Lambda,$ inherits a
highest weight category structure \cite{CPS88}.

\begin{proposition}
Let $m$ be any positive real number (we will just use the integer
case), and put $\Lambda_{m}=\{y\in \mathbb{Y}|$
$|(y,\alpha^{\vee})|\leq mp$ for all $\alpha\in R^{+}\}.$ Then
$\Lambda_{m}$ is a poset ideal in $\mathbb{Y}$ with respect to
either of the orders $\preceq$ or $\preceq^{\circ}$.
\end{proposition}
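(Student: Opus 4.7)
The plan is to reduce the poset-ideal property to a monotonicity statement for dominant weights, exploiting two immediate symmetries of $\Lambda_m$: it is $W$-stable (the defining condition involves \emph{all} positive coroots, and $W$ permutes these up to sign, which is absorbed by the absolute value) and it satisfies $\Lambda_m=-\Lambda_m$ (by the absolute value again).

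Suppose $y\preceq y^{\prime}$ with $y^{\prime}\in\Lambda_m$. By $W$-invariance, $y\in\Lambda_m$ iff $y^+\in\Lambda_m$, and similarly for $y^{\prime}$. In the Bruhat branch of the definition of $\preceq$ one has $y^-=y^{\prime-}$, so $y$ and $y^{\prime}$ lie in a common $W$-orbit and $y^+=y^{\prime+}$, whence the conclusion is immediate. This reduces the proposition to the single claim: if $\mu\leq\mu^{\prime}$ are dominant weights in $\mathbb{Y}$ (in the usual dominance order) and $\mu^{\prime}\in\Lambda_m$, then $\mu\in\Lambda_m$.

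For this, I would use the highest coroot $\gamma$, namely the highest root of the dual root system $R^\vee$. For any dominant $\mu$ and any $\alpha\in R^+$, writing $\alpha^\vee=\sum_i c_i\alpha_i^\vee$ and $\gamma=\sum_i n_i\alpha_i^\vee$ with $0\leq c_i\leq n_i$, one obtains $0\leq(\mu,\alpha^\vee)\leq(\mu,\gamma)$, so membership $\mu\in\Lambda_m$ is equivalent to the single inequality $(\mu,\gamma)\leq mp$. Since $\gamma$ is the highest root of $R^\vee$, it is dominant in that root system; as the simple coroots of $R^\vee$ are the simple roots $\alpha_i$ of $R$, this translates to $(\alpha_i,\gamma)\geq 0$ for every $i$. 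Because $\mu^{\prime}-\mu$ is a non-negative integer combination of the $\alpha_i$, one concludes $(\mu,\gamma)\leq(\mu^{\prime},\gamma)\leq mp$, so $\mu\in\Lambda_m$.

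For the antipodal order $\preceq^\circ$: if $y\preceq^\circ y^{\prime}$ and $y^{\prime}\in\Lambda_m$, then $-y\preceq -y^{\prime}$ and $-y^{\prime}\in\Lambda_m$ by the symmetry $\Lambda_m=-\Lambda_m$, so the preceding argument yields $-y\in\Lambda_m$ and hence $y\in\Lambda_m$. The main technical point is identifying $\gamma$ as both the maximizer of $(\mu,\,\cdot\,)$ over positive coroots (for dominant $\mu$) and as a dominant coweight; the remainder is routine bookkeeping with the two disjuncts in the definition of $\preceq$.
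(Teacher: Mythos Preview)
Your argument is correct and follows essentially the same route as the paper's proof: reduce to dominant representatives via $W$-stability of $\Lambda_m$, observe that for dominant $\mu$ membership in $\Lambda_m$ is controlled by the single pairing with the highest coroot, and use monotonicity of that pairing under the dominance order. Your highest coroot $\gamma$ is exactly the paper's $\alpha_0^\vee$ (the coroot of the maximal short root), and your treatment of $\preceq^\circ$ via $\Lambda_m=-\Lambda_m$ is equivalent to the paper's use of $w_0$-stability; if anything, you supply a bit more justification for the monotonicity step $(\mu,\gamma)\leq(\mu',\gamma)$ than the paper does.
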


\begin{proof}
First, note that $R^{+}$ can be replaced by $R=R^{+}\cup-R^{+}$ in
the definition of $\Lambda_{m}$; consequently, the latter set is
stable under the action of $W$. $\ $In particular, it is stable
under $w_{0}$, so it suffices to treat the order $\preceq$ . Also,
the stability implies, for $y\in \mathbb{Y,}$ that $y\in\Lambda_{m}$
iff $y^{+}\in\Lambda_{m}$. The latter holds iff 
$(y^{+}\alpha^{\vee})\leq mp$\ $\ $\ for all $\alpha\in R^{+}$,
which holds iff $(y^{+}\alpha_{0}^{\vee})\leq mp\,$, where
$\alpha_{0}$ denotes the maximal short root. Let $y\preceq
y^{\prime}$ with $y,y^{\prime}\in\mathbb{Y}$ and
$y^{\prime}\in\Lambda.$ Then $y^{+}\leq y^{\prime+}$ in the
dominance
order, which implies $(y^{+},\alpha_{0}^{\vee})\leq(y^{\prime+},\alpha_{0}%
^{\vee})\leq mp.$ Hence, $y\in \Lambda_{m}$, and the proposition is
proved.
\end{proof}

\bigskip

There is an easy analog of the proposition for dominant weights. We will just
use those in the weight poset $(W_{aff}\cdot0)^{+}$ of dominant weights in
$W_{aff}\cdot0$. These are the dominant weights which occur as highest weights
for irreducible modules in $block(G)$. We alert the reader that we will later
write $(W_{aff}\cdot0)^{+}=(W_{p}\cdot0)^{+}.$ We record the following result,
whose proof is immediate.

\begin{proposition}
Let $\ \ m$ be a positive real number (as in the previous
proposition), and put\ $\Gamma_{m}=\{y\in(W_{aff}\cdot0)^{+}|$
$(y,\alpha^{\vee})\leq mp$ for all
$\alpha\in R^{+}\}.$ Then $\Gamma_{m}$ is a poset ideal\ in $(W_{aff}%
\cdot0)^{+}$ with respect to the dominance order, and $\Gamma_{m}=$
$\Lambda_{m}\cap(W_{aff}\cdot0)^{+}$.
\end{proposition}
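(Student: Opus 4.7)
The plan is to reduce the defining condition of $\Gamma_m$ to a single inequality tested against the highest coroot, and then use the dominance of that coroot in the dual root system to verify both assertions at once.

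First, I would observe that for any $y \in (W_{aff}\cdot 0)^+$ and any $\alpha \in R^+$, the pairing $(y,\alpha^{v})$ is already non-negative, so the absolute value appearing in the definition of $\Lambda_m$ coincides with $(y,\alpha^v)$ itself. This immediately identifies $\Gamma_m$ with $\Lambda_m \cap (W_{aff}\cdot 0)^+$ and reduces the second claim to a triviality.

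Next, to get a single controlling inequality, I would invoke the classical fact that among positive coroots the \emph{highest} coroot is $\alpha_0^v$, where $\alpha_0$ is the maximal short root used in the previous proposition, and that for dominant $y$ one has $\max_{\alpha \in R^+}(y,\alpha^v) = (y,\alpha_0^v)$. Hence membership of $y$ in $\Gamma_m$ is equivalent to the single inequality $(y,\alpha_0^v) \leq mp$.

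For the poset ideal property, suppose $y' \in \Gamma_m$ and $y \in (W_{aff}\cdot 0)^+$ satisfies $y \leq y'$ in the dominance order, so $y' - y$ is a non-negative integral combination of simple roots. The highest coroot $\alpha_0^v$ is dominant in the dual root system, meaning $(\beta,\alpha_0^v) \geq 0$ for every simple root $\beta$; therefore $(y'-y,\alpha_0^v) \geq 0$, and so $(y,\alpha_0^v) \leq (y',\alpha_0^v) \leq mp$. Thus $y \in \Gamma_m$.

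I do not expect a substantive obstacle here: the only item requiring any attention is the dominance of $\alpha_0^v$ in the dual root system, which is standard (cf.\ Jantzen \cite{Ja03}), and this is presumably why the authors describe the proof as immediate.
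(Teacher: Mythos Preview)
Your argument is correct, and it is precisely the reasoning the paper leaves implicit: the authors state that the proof is immediate and give no argument, and indeed your steps simply specialize the proof of the preceding proposition (reduction to the single inequality $(y,\alpha_0^{\vee})\le mp$ via the highest coroot, then monotonicity under the dominance order) to the dominant case, where the absolute value is redundant. There is nothing to add.
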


There are similar (easier) truncations for $\mathcal{C}_{G\text{
}}^{0,f}$ $=block(G).$ The category $\mathcal{C}_{G\text{ }}^{0}$is
a highest weight category with respect to the poset
$(W_{aff}\cdot0)^{+}$ of dominant weights in $W_{aff}\cdot0$ using
several orders, all of them equivalent in the sense of giving the same
costandard and standard modules. See \cite[1.5,1.8]{Ja03}. We will
just use the dominance order. We take this opportunity to note that
$W_{aff}$, in its affine action on $\mathbb{X}$, is denoted $W_{p}$ in
\cite[1.5,1.8]{Ja03}, with the $p$ reminding us that $W_{aff}$ acts
on $\mathbb{X}$ as the semidirect product of $W$ and $p\mathbb{Y}$, with $W$ acting linearly (before the
``dot" is introduced), and
elements of $p\mathbb{Y}$ acting by translation. If $\Gamma\subseteq(W_{aff}\cdot0)^{+}=(W_{p}%
\cdot0)^{+}$ is a finite poset ideal, let $\mathcal{C}_{G}^{0,f}[\Gamma]$
denote the full subcategory of $\mathcal{C}_{G}^{0,f}$ consisting of those
objects whose composition factors all have highest weights in $\Gamma.$ Then
$\mathcal{C}_{G}^{0,f}[\Gamma]$ inherits a highest weight category structure
from $\mathcal{C}_{G}^{0}.$

\section{Main Results: Bounded Derived Categories, Truncation, and the Induction Theorem} \label{sec:main}

Because $\mathcal{C}_{G}^{0,f}[\Gamma]$ above inherits a highest weight
category structure from $\mathcal{C}_{G}^{0},$ there is a natural
full embedding of triangulated categories
$D^{b}(\mathcal{C}_{G}^{0,f}[\Gamma])\subseteq
D^{b}(\mathcal{C}_{G}^{0,f})=D^{b}(block(G))$. To simplify notation,
we write
$D^{b}(\mathcal{C}_{G}^{0,f}[\Gamma])=D^{b,f}(Dist(G)_{\Gamma})$.\footnote{This
notation, in addition, suggests the (correct) fact that
$\mathcal{C}_{G\text{ }}^{0,f}[\Gamma]$ is naturally equivalent to
the category of finite dimensional modules for $Dist(G)_{\Gamma}$.
The latter is a (finite dimensional) quasi-hereditary algebra,
defined as the quotient of $Dist(G)$ by the ideal which is the
annihilator of all rational $G-$modules whose composition factors
have high weights only in $\Gamma$.
\par
Similar remarks may be made regarding the notation
$D^{b,f}(Dist(B)_{\Lambda })$ in the next paragraph. We leave the
fairly routine proofs in both cases to the interested reader. These
identifications, though informative, are not required for our main
results.} Thus, the previous\ strict full embedding is now written
$D^{b,f}(Dist(G)_\Gamma)\subseteq D^b(block(G)).$ With abuse of
notation, we will sometimes identify $D^{b,f}(Dist(G)_\Gamma)$ with
its strict image in $D^b(block(G))$.

Similarly, if
$\Lambda\subseteq\mathbb{Y}$ is a finite poset ideal with respect to
either $\preceq$ or $\preceq^{\circ}$, there is a natural full
embedding $D^{b}(\mathcal{C}_{B}^{0,f}[\Lambda])\subseteq D^{b}(\mathcal{C}%
_{B\text{ }}^{0,f})$. We write $D^{b}(\mathcal{C}_{B}^{0,f}[\Lambda
])=D^{b,f}(Dist(B)_{\Lambda})$ and also let $D_{triv}(Dist(B)_{\Lambda})$
denote the full subcategory of $D^{b,f}(Dist(B)_{\Lambda})$ whose cohomology
has only weights $py$ with $y\in \mathbb Y$ and $py\in\Lambda$. Then the full
embedding $D^{b}(\mathcal{C}_{B}^{0,f}[\Lambda])\subseteq D^{b}(\mathcal{C}%
_{B\text{ }}^{0,f})$ gives a full embedding
$D_{triv}(Dist(B)_{\Lambda })\subseteq D_{triv}(B).$ The strict image$\footnote{There seems to be no standard
terminology here. If $F$:$\mathcal{A}\rightarrow \mathcal{B}$ is a full
(triangulated) functor between (triangulated) categories, we define
the {\bf strict image} of $F$, or of $\mathcal{A}$ under $F$, to be the
smallest full (triangulated) subcategory of $\mathcal{B}$ containing,
for each object $X$ in $\mathcal{A}$, each object of $\mathcal{B}$ that is isomorphic
to $F(X)$.}$ of $D_{triv}(Dist(B)_\Lambda)$ is the (full)
subcategory, call it  $D_{triv,\Lambda}(B)$, of objects in $D_{triv}(B)$ represented by complexes
which have cohomology with all high weights in $\Lambda$. This
subcategory $D_{triv,\Lambda}(B)$ of $D_{triv}(B)$ is certainly interesting in its own
right, and its interpretation here as a strict image gives a
(non-obvious) way of viewing it inside the more ``finite"
$D^{b,f}(Dist(B)_\Lambda).$

Next, it makes sense to ask when the induction equivalence $\RInd_{B}
^{G}:$ $D_{triv}(B)\rightarrow D^{b}(block(G))$ takes $D_{triv}(Dist(B)_{\Lambda
})$ into $D^{b,f}(Dist(G)_{\Gamma}),$ and when the strict image of
$D_{triv}(Dist(B)_{\Lambda})$ $\ $contains $D^{b,f}(Dist(G)_{\Gamma}).$ There
are easy combinatorial sufficient conditions in each case. Let $\mathbb{X}%
^{+}$ $\subseteq\mathbb{X}$ denote the set of dominant integral weights.

\begin{proposition}\label{prop5} 
\bigskip Let   $\Gamma\subseteq(W_{aff}\cdot0)^{+}$ and $\Lambda\subseteq\mathbb{Y}$ be finite poset ideals, as above.

(1) If $(W\cdot(\Lambda\cap
p\mathbb{Y}))\cap\mathbb{X}^{+}\subseteq\Gamma,$ then
$\RInd_{B}^{G}$ takes $D_{triv}(Dist(B)_{\Lambda})$ into $D^{b,f}%
(Dist(G)_{\Gamma})\subseteq D^{b}(block(G))$.

(2) If $W\cdot\Gamma\cap p\mathbb{Y}\subseteq\Lambda$, then the strict image of
$\RInd_{B}^{G}D_{triv}(Dist(B)_{\Lambda})$ $\subseteq D^{b}(block(G))$
contains $D^{b,f}(Dist(G)_{\Gamma})$.
\end{proposition}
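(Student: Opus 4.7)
The plan is to treat the two parts in turn, reducing each to a weight-by-weight analysis via d\'evissage.

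For Part (1), I would begin by reducing to the case of a single one-dimensional $B$-module. Every finite-dimensional $B$-module whose weights lie in $\Lambda \cap p\mathbb{Y}$ admits a $B$-flag with one-dimensional successive quotients $k_{p\lambda}$, $p\lambda \in \Lambda \cap p\mathbb{Y}$; since $\mbox{RInd}_B^G$ is a triangulated functor and $D^{b,f}(Dist(G)_\Gamma)$ is a triangulated subcategory of $D^b(block(G))$, it suffices to show $\mbox{RInd}_B^G(k_{p\lambda}) \in D^{b,f}(Dist(G)_\Gamma)$ for each such $p\lambda$. Next, for each fixed $p\lambda$, the cohomology modules $R^i\mbox{Ind}_B^G(k_{p\lambda}) = H^i(G/B,\mathcal{L}(p\lambda))$ are finite-dimensional $G$-modules lying in $block(G)$ by the linkage principle, and their weights are contained in the convex hull of the $W$-orbit of $p\lambda$ by the standard weight bound for line-bundle cohomology on $G/B$. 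Consequently the highest weight $\mu$ of any composition factor satisfies $\mu \leq w \cdot p\lambda$ in the dominance order, where $w \cdot p\lambda$ is the dominant $W$-dot-translate of $p\lambda$. The hypothesis $W \cdot (\Lambda \cap p\mathbb{Y}) \cap \mathbb{X}^+ \subseteq \Gamma$ places $w \cdot p\lambda$ in $\Gamma$, and the ideal property of $\Gamma$ then forces $\mu \in \Gamma$.

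For Part (2), the plan is to exploit the equivalence $\mbox{RInd}_B^G : D_{triv}(B) \xrightarrow{\sim} D^b(block(G))$. Given $Y \in D^{b,f}(Dist(G)_\Gamma)$, there is a unique (up to isomorphism) $X \in D_{triv}(B)$ with $\mbox{RInd}_B^G(X) \simeq Y$, and the task reduces to verifying $X \in D_{triv}(Dist(B)_\Lambda)$, i.e., that the cohomology weights of $X$ all lie in $\Lambda \cap p\mathbb{Y}$. By d\'evissage through the triangulated structure, it suffices to handle the case $Y = L(\mu)$ for $\mu \in \Gamma$ and show that the corresponding preimage $X_\mu$ has cohomology weights in $\Lambda$. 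The combinatorial input is that the $p\mathbb{Y}$-weights of $L(\mu)$, viewed as a $B$-module, lie in $\mathrm{conv}(W\mu) \cap p\mathbb{Y}$, and this set is contained in $W \cdot \Gamma \cap p\mathbb{Y} \subseteq \Lambda$ by the hypothesis together with the downward closure of $\Gamma$.

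The main obstacle is the last matching step in Part (2): the inverse of $\mbox{RInd}_B^G$ is not a standard adjoint (in particular, it is not plain $B$-restriction), so weight information does not transfer for free from $Y$ back to $X$. I expect to invoke the explicit construction of the inverse equivalence from the proof of the induction theorem---which, up to natural isomorphism, realizes the inverse as a derived projection onto the $p\mathbb{Y}$-weight subspace of the $B$-restricted $G$-module---or, alternatively, to carry out a careful inductive argument through the standard and costandard filtrations of the two highest weight categories ($\mathcal{C}_G^{0,f}$ with the dominance order, $\mathcal{C}_B^{0,f}$ with van der Kallen's excellent order), matching truncations on each side. Part (1), by contrast, is direct: it rests only on the linkage principle, the convex-hull weight bound for line-bundle cohomology on $G/B$, and the ideal property of $\Gamma$.
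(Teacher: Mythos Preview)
Your treatment of Part~(1) is essentially the paper's argument: reduce by d\'evissage to a one-dimensional $B$-module $k_{p\lambda}$, then bound the highest weights of composition factors of its derived induction. One caution: the ``convex hull'' bound alone only gives $\mu \le (p\lambda)^+$ (the undotted dominant translate), which need not lie in $(W_{aff}\cdot 0)^+$, let alone in $\Gamma$. What you actually need, and what the paper invokes, is Andersen's strong linkage result, which gives the sharper bound $\mu \le w\cdot p\lambda$ with $w\cdot p\lambda$ the \emph{dot}-dominant translate; this is the element the hypothesis places in $\Gamma$.

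For Part~(2) you have correctly identified that analyzing the inverse equivalence is the obstacle, but you have not found the idea that dissolves it. The paper works \emph{forward}, never touching the inverse. Given $\gamma\in\Gamma$, the hypothesis $W\cdot\Gamma\cap p\mathbb{Y}\subseteq\Lambda$ supplies an element $py\in\Lambda\cap p\mathbb{Y}$ with $w\cdot py=\gamma$ for some $w\in W$. Andersen's theory then says $L(\gamma)$ occurs with multiplicity exactly~$1$ in the cohomology of $\mbox{RInd}_B^G[k_{py}]$, and every other composition factor $L(\gamma')$ has $\gamma'<\gamma$. Now induct on $\gamma$: assuming all such $L(\gamma')$ already lie in the strict image of $D_{triv}(Dist(B)_\Lambda)$, and noting that this strict image is a triangulated subcategory (here is the one place Theorem~1 is used---the restriction of an equivalence is fully faithful, so its strict image is triangulated), one peels off the smaller factors from $\mbox{RInd}_B^G[k_{py}]$ by triangles to conclude $L(\gamma)$ lies in the strict image as well. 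This argument uses only the \emph{statement} of Theorem~1, not any explicit description of the inverse functor, which is precisely the paper's stated aim; your proposed route through the inverse would sacrifice that independence.
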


\begin{proof}
Consider a complex $M$ representing an object $[M]$ in $D_{triv}%
(Dist(B)_{\Lambda})\subseteq D_{triv}(B)$. To prove
$\RInd_{B}^{G}$ $[M]$ belongs to
$D^{b,f}(Dist(G)_{\Gamma})\,,$ it suffices, by standard truncation
methods\ \cite{BBD82}, to take $M$ concentrated in a single
cohomological degree, which may be taken to be $0$.  Thus, $M$ has a finite filtration with sections
one dimensional\ $B-$modules, each identified with a weight
$py\in\Lambda\cap p\mathbb{Y.}$ Without loss, $M$ is itself one
dimensional, identifying with such a $py$. As is well known, there
is a unique dominant weight $\gamma$ in $W\cdot py$ (a verification
is included in the proof of Lemma \ref{lem6} below), 
which must belong to
$\Gamma$ when the hypothesis of (1) holds.   By Andersen's strong
linkage theory (available in \cite{Ja03}), each composition factor
of any cohomology group of $\RInd_{B}^{G}$ $[M]$ must then
have highest weight $\lambda\leq\gamma$ (even in the strong linkage
order). Thus, $\lambda$ belongs to the poset ideal $\Gamma.$ This
proves assertion (1).

Andersen's strong linkage theory also guarantees, under the hypothesis that $\gamma$ is dominant
and $\gamma=w\cdot py$ with $py\in\Lambda\cap p\mathbb{Y,}$ that the
irreducible module $L(\gamma)$ appears with multiplicity $1$ in the cohomology
of $\mbox{RInd}_{B}^{G}$ $[py],$ again identifying $py$ with the associated
one dimensional $B$-module. \ This gives an easy proof of assertion (2) by
induction: Suppose the hypothesis of (2) holds, and all irreducible modules 
$L(\gamma^{\prime})$ belong to the strict image of $\mbox{RIndf}_{B}%
^{G}D_{triv}(Dist(B)_{\Lambda})$ whenever $\gamma^{\prime}<\gamma.$ By definition,
$\mbox{RInd}_{B}^{G}$ $[py]$ belongs to $\mbox{RInd}_{B}^{G}D_{triv}%
(Dist(B)_{\Lambda})$ and, as remarked, has $L(\gamma)$ as a composition factor
of its cohomology with multiplicity $1$. All other composition factors
$L(\gamma^{\prime})$ satisfy $\gamma^{\prime}<\gamma$ and so belong to the
strict image \ of $\ \mbox{RInd}_{B}^{G}D_{triv}(Dist(B)_{\Lambda}).$ It then
follows that $L(\gamma)$ belongs to this strict image (which is a full triangulated subcategory of $D^{b,f}(Dist(G))$).
Now  (2) follows by induction. This proves the proposition.
\end{proof}

\begin{lemma} \label{lem6} 
\bigskip Let $m$ be a positive real number, and put $\Lambda=\Lambda_{m}$,
$\Gamma=\Gamma_{m}.$ Recall our standing hypothesis $p>h.$ Then both of the
following hold.

(1) \ The sets $\Lambda,\Gamma$ satisfy the hypothesis (and conclusion) of part
(1) of Proposition \ref{prop5}. 

(2) \ If, in addition,$\ m$ is an integer and $p\geq2h-2$, the sets
$\Lambda,\Gamma$ satisfy the hypothesis (and conclusion) of part (2) of Proposition \ref{prop5}. 
\end{lemma}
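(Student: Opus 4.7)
The plan is to verify the combinatorial inclusions that serve as the hypotheses of the two parts of the previous proposition, from which the stated conclusions then follow directly. Three ingredients are used throughout: the $W$-stability of $\Lambda_m$ established in the proof of Proposition~2; the height estimate $|(\rho,\beta^{v})|\le h-1$ for every positive coroot $\beta^{v}$, with equality realized at the longest (highest) coroot $\alpha_0^{v}$; and the fact that $\alpha_0^{v}$ is the maximum of the positive coroot poset, so that $(\gamma,\alpha^{v})\le(\gamma,\alpha_0^{v})$ for any dominant $\gamma$ and any $\alpha\in R^+$.

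For part~1, I would take $y\in\Lambda_m\cap p\mathbb{Y}$ and $w\in W$ with $\gamma:=w\cdot y$ dominant, and show $\gamma\in\Gamma_m$. Membership in $(W_{aff}\cdot 0)^+$ is immediate since $y\in p\mathbb{Y}\subseteq W_{aff}\cdot 0$ and the dot action of $W\subseteq W_{aff}$ preserves orbits. For the numerical bound, write $\gamma+\rho=w(y+\rho)$ and pair against $\alpha_0^{v}$: using $|(y,w^{-1}\alpha_0^{v})|\le mp$ (from $y\in\Lambda_m$) and $|(\rho,w^{-1}\alpha_0^{v})|\le h-1$, one gets $(\gamma+\rho,\alpha_0^{v})\le mp+(h-1)$, so $(\gamma,\alpha_0^{v})\le mp$ after subtracting $(\rho,\alpha_0^{v})=h-1$. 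For a general $\alpha\in R^+$, dominance of $\gamma$ together with $\alpha_0^{v}-\alpha^{v}\ge 0$ in the coroot lattice gives $(\gamma,\alpha^{v})\le(\gamma,\alpha_0^{v})\le mp$. Hence $\gamma\in\Lambda_m\cap(W_{aff}\cdot 0)^+=\Gamma_m$.

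For part~2, I would take $\gamma\in\Gamma_m$ and $w\in W$ with $w\cdot\gamma\in p\mathbb{Y}$, and show $w\cdot\gamma\in\Lambda_m$. From $w\cdot\gamma=w\gamma+(w\rho-\rho)$, together with $|(\gamma,w^{-1}\alpha^{v})|\le mp$ (using the dominance of $\gamma$ and $\gamma\in\Gamma_m$) and $|(w\rho-\rho,\alpha^{v})|\le 2(h-1)$, the a priori bound is $|(w\cdot\gamma,\alpha^{v})|\le mp+2(h-1)$. Because $w\cdot\gamma\in p\mathbb{Y}$, this pairing equals $pk_{\alpha}$ for some integer $k_{\alpha}$, so $|k_{\alpha}|\le m+(2h-2)/p$. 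The hypotheses $p$ prime, $p>h$, and $p\ge 2h-2$ in fact force $p\ge 2h-1$, because $2h-2=2(h-1)$ is composite for $h\ge 3$, while for $h=2$ the condition $p>h$ already gives $p\ge 3=2h-1$; therefore $(2h-2)/p<1$, and since $m$ is an integer, $|k_{\alpha}|\le m$, so $w\cdot\gamma\in\Lambda_m$. The main obstacle lies precisely here: the naive triangle-inequality estimate overshoots $\Lambda_m$ by an additive $2(h-1)$, and the twin hypotheses "$m$ integer" and "$p\ge 2h-2$", together with the primality of $p$, are exactly what is needed to sharpen $|k_{\alpha}|\le m+1$ down to $|k_{\alpha}|\le m$.
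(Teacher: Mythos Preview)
Your argument is correct, and it reaches the same conclusions as the paper, but by a somewhat different route. The paper first proves an auxiliary \emph{claim}: for $\nu=py\in p\mathbb{Y}$ and $w\in W$, if $w\cdot\nu$ is dominant then so is $w\nu$, and such $w$ is unique (this uses $p>h$). The paper then exploits dominance of $w\nu$ in both parts. For part~1 it uses $w\cdot\nu\le w\nu$ to get $(w\cdot\nu,\alpha_0^{\vee})\le(w\nu,\alpha_0^{\vee})=|(\nu,w^{-1}\alpha_0^{\vee})|\le mp$; for part~2 it bounds $|(\nu,\alpha^{\vee})|\le(w\nu,\alpha_0^{\vee})$ via dominance of $w\nu$, and then runs the integrality argument only at $\alpha_0^{\vee}$, using that $(w\nu,\alpha_0^{\vee})=p(wy,\alpha_0^{\vee})$ is a multiple of $p$.

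You bypass the claim entirely and work directly with triangle-inequality estimates, running the divisibility-by-$p$ argument at every coroot $\alpha^{\vee}$ rather than just at $\alpha_0^{\vee}$. This is cleaner and more self-contained for the lemma itself; you also make explicit why $p>h$, $p\ge 2h-2$, and primality together force the strict inequality $p>2h-2$, a point the paper passes over in one sentence. The trade-off is that the paper's claim is not wasted effort: it is invoked again in the proof of the next Corollary (the bijection $\Lambda_m\cap p\mathbb{Y}\leftrightarrow\Gamma_m$), where uniqueness of the dominant representative in $W\cdot\nu$ is needed. So your approach is more economical for this lemma in isolation, while the paper's approach amortizes the auxiliary claim over two results.
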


\begin{proof}
First, we need a claim (which\ does not involve $m$ and assumes only
$p\geq h $ ). Let $y\in\mathbb{Y}$, and put $\nu=py\,$. Let $w\in W$
with $w\cdot\nu$ +$\rho$ $\ $dominant. (At least one such $w$ always
exists; and the dominant weight $w\cdot\nu$+$\rho$ =$w(\nu+\rho)\
$is the unique dominant weight in $W(\nu+\rho)$. 

\underline{We
claim} $w$ is unique, and the weights $w\nu$\ and $w\cdot\nu$ are
also dominant. In addition $w$ is also the unique element in $W$
with $w.\nu$ dominant.

To prove the claim, note first that $0<|(\rho,\alpha^{\vee})|\leq
h-1<p$ for every root $\alpha$.  Since
$(w\rho,\alpha^{\vee})=(\rho,(w^{-1}\alpha)^{\vee}),$ it follows
that the dominant weight $w(\nu+\rho)=p(wy)+w\rho$ is regular: That
is, $(p(wy)+w\rho$, $\alpha^{\vee})\neq0$ for all roots $\alpha$. So
$w(\nu+\rho)$ has trivial stabilizer in $W$. Thus, $w$ is unique.
Next, for any simple root $\alpha,$ apply $(-,\alpha^{\vee})$ to
both sides of the equation
$$w\cdot\nu+\rho=p(wy)+w\rho.$$
If we ever had $(wy,\alpha^{\vee})<0$, then, using the bound
$|(w\rho,\alpha^{\vee})|<p$, the above equation would give
$(w\cdot\nu +\rho,\alpha^{\vee})<0,$ contradicting the dominance of
$w\cdot\nu+\rho$. Consequently, $(wy,\alpha^{\vee})\geq0$ for all
simple roots $\alpha,$ and so $wy$ must be dominant. Also,
$w\nu=pwy$ must be dominant. Next, since $w\cdot\nu$+$\rho$
=$w(\nu+\rho)$ is both dominant and regular (in the sense above),
the weight$\ w\cdot\nu$ must be dominant. (In particular this gives
the verification promised in the proof of Proposition \ref{prop5}.) 
Finally, if $w^{\prime}\in W$ is such that $w^{\prime}\cdot\nu$ is
dominant, then $w^{\prime}(\nu+\rho)=w^{\prime }\cdot\nu+\rho$ is
also dominant. Thus, $w^{\prime}(\nu+\rho)=w(\nu+\rho ),$which gives
$w=w^{\prime}$ by regularity of $w(\nu+\rho)$, as previously noted.
This completes the proof of the claim.

Note that, as a consequence, one can deduce from dominance of $w\cdot\nu$, when
$\nu\in p\mathbb{Y}$ and $w\in W$, that $w\nu$ is dominant. We will often
use below the claim in this way.

Next, suppose $\nu$ belongs to $\Lambda$, as well as\ to
$p\mathbb{Y}$, and suppose $w\in W$ is such that $w\cdot\nu$ is
dominant. Thus, $w\nu$ is also dominant, by the claim. From the
inequality $w\cdot\nu=w\nu+w\rho-\rho\leq w\nu$, we obtain, for each
positive root $\alpha,$ the $\ $inequality
$(w\cdot\nu,\alpha^{\vee})\leq(w\cdot\nu,\alpha_{0}^{\vee})\leq(w\nu
,\alpha_{0}^{\vee})=|(\nu,w^{-1}\alpha_{0}^{\vee})|\leq mp$. \
(Recall the
definition of $\Lambda=\Lambda_{m}.$) Thus $w\cdot\nu\in$ $\Lambda_{m}%
\cap(W_{p}\cdot0)^{+}=$ $\Gamma_{m}=\Gamma.$ We have shown $W\cdot(\Lambda\cap
p\mathbb{Y)}\cap\mathbb{X}^{+}\subseteq\Gamma,$ the hypothesis of part (1) of
Proposition \ref{prop5}. 
Part (1) of the lemma follows.

Finally, suppose $\nu$ belongs to $W\cdot\Gamma$, as well as to
$p\mathbb{Y}$, and let $w\in W$ be such that $w\cdot\nu\in\Gamma.$
We want to show $\nu \in\Lambda,$ as required in part (2) of the
lemma. Assume $m$ is a (positive) integer and that $p\geq2h-2\,$, as
given in the hypothesis of part (2). Note that, since also $p>h\geq
2,$ we have the strict inequality $p>2h-2.$ Write $\nu=py$ with
$y\in\mathbb{Y.}$ By the claim, the weight $w\nu$ is dominant. To
prove $\nu\in\Lambda$=$\Lambda_{m}$ we just need to show
$|(\nu,\alpha^{\vee})|\leq mp$ for all roots $\alpha$. But
$|(\nu,\alpha^{\vee})|=|(w\nu,w\alpha^{\vee})|=|(w\nu,\pm w\alpha^{\vee}%
)|\leq(w\nu,\alpha_{0}^{\vee})\,$, the last inequality following from the
dominance of $w\nu$. $\ $\ Also, $\ $we have $(w\nu,\alpha_{0}^{\vee
})=(wpy,\alpha_{0}^{\vee})=p(wy,\alpha_{0}^{\vee})$ and
$$p(wy,\alpha_{0}^{\vee})
 =(w\nu,\alpha_{0}^{\vee})=(w\cdot\nu,\alpha
_{0}^{\vee})+(\rho-w\rho,\alpha_{0}^{\vee})
\leq mp+2h-2 < (m+1)p.$$
Since $m$ and $(wy,\alpha_{0}^{\vee})$ are integers, we have $(wy,\alpha
_{0}^{\vee})\leq m,$ and so $(w\nu,\alpha_{0}^{\vee})=p(wy,\alpha_{0}^{\vee
})\leq pm$. \ Thus $\nu\in\Lambda_{m}=\Lambda$, as required. The hypothesis
$W\cdot\Gamma\cap p\mathbb{Y}\subseteq\Lambda$ of part (2) of 
Proposition \ref{prop5} 
 is now verified, and so both parts of the lemma have been proved.
\end{proof}

\bigskip

The hypothesis on $p$ \ in the purely combinatorial result below is
essentially the same as $p\geq2h-2$, given our standing hypothesis $p>h.$
There are no other hypotheses, except for the stated one on $m.$

\begin{corollary}
Let $m$ be a positive integer and assume $p>2h-2.$ Then there is a 1-1
correspondence between\ $\Lambda_{m}\cap p\mathbb{Y}$ and $\Gamma_{m}$ , given by
sending an element $\nu\in\Lambda_{m}\cap p\mathbb{Y}$ to the unique dominant weight
$\gamma$ in $W\cdot\nu.$ This weight $\gamma$ is in $\Gamma_{m}$. In the inverse
direction, a weight $\gamma\in\Gamma_{m}$ \ is sent to the unique weight $\nu$ in
$W\cdot\gamma$ of the form $py$ with $y\in\mathbb{Y}.$ This weight $\nu$ is in
$\Lambda_{m}\cap p\mathbb{Y}.$
\end{corollary}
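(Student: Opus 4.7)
The plan is to read the corollary as a bijective repackaging of the two containments $W\cdot(\Lambda\cap p\mathbb{Y})\cap\mathbb{X}^+\subseteq\Gamma$ and $W\cdot\Gamma\cap p\mathbb{Y}\subseteq\Lambda$ just established in the lemma; the extra content beyond these containments is the uniqueness of the two maps, supplied by the Claim in the lemma's proof in one direction and a short direct calculation using the hypothesis $p>2h-2$ in the other.

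First I would define the forward map $\Phi\colon\Lambda_m\cap p\mathbb{Y}\to\Gamma_m$ by sending $\nu=py$ to $\gamma:=w\cdot\nu$, where $w\in W$ is the unique element produced by the Claim with $w\cdot\nu$ dominant. Since the Claim requires only $p\geq h$, it is available; it supplies both the existence of such a $w$ and the uniqueness of $\gamma$ as the dominant weight in the dotted orbit $W\cdot\nu$. Part 1 of the lemma then gives $\gamma\in\Lambda_m\cap(W_{aff}\cdot 0)^+=\Gamma_m$.

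For the inverse $\Psi\colon\Gamma_m\to\Lambda_m\cap p\mathbb{Y}$, I use the semidirect product structure $W_{aff}=W\ltimes p\mathbb{Y}$ (with $p\mathbb{Y}$ acting by translation under the dot action) to write any $\gamma\in\Gamma_m\subseteq W_{aff}\cdot 0$ as $\gamma=u\cdot(pz)$, so that $\nu:=u^{-1}\cdot\gamma=pz$ lies in $W\cdot\gamma\cap p\mathbb{Y}$. For uniqueness of this element, suppose $\nu_1,\nu_2\in W\cdot\gamma\cap p\mathbb{Y}$ and write $\nu_2=v\cdot\nu_1$ for some $v\in W$; expanding $v\cdot\nu_1=v\nu_1+v\rho-\rho$ and using $v\nu_1,\nu_2\in p\mathbb{Y}$ yields $v\rho-\rho\in p\mathbb{Y}$. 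But $v\rho-\rho$ lies in the root lattice with $|(v\rho-\rho,\alpha^\vee)|\leq 2(h-1)<p$ for every simple coroot $\alpha^\vee$, so $v\rho=\rho$, and hence $v=1$ by regularity of $\rho$, giving $\nu_1=\nu_2$. This is precisely where the hypothesis $p>2h-2$ is used. Part 2 of the lemma, whose hypotheses $m\in\mathbb{Z}_{>0}$ and $p\geq 2h-2$ are implied by those of the corollary, then gives $\Psi(\gamma)\in\Lambda_m$.

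Mutual inversion is immediate from the uniqueness statements. If $\nu\in\Lambda_m\cap p\mathbb{Y}$ and $\gamma=\Phi(\nu)=w\cdot\nu$, then $\nu=w^{-1}\cdot\gamma\in W\cdot\gamma\cap p\mathbb{Y}$, so $\Psi(\gamma)=\nu$ by uniqueness. Conversely, if $\gamma\in\Gamma_m$ and $\nu=\Psi(\gamma)=u^{-1}\cdot\gamma$, then $u\cdot\nu=\gamma$ is dominant, so the Claim's uniqueness gives $\Phi(\nu)=\gamma$. I expect the one mildly delicate step to be the uniqueness calculation for $\Psi$; the rest is bookkeeping on top of the preceding lemma.
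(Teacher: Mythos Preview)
Your proof is correct and follows essentially the same route as the paper: both define $\Phi$ via the Claim in the lemma's proof together with part~1 of the lemma, define $\Psi$ via the semidirect product decomposition of $W_{aff}$ together with part~2, and then read off mutual inversion from the two uniqueness statements.

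The only point of divergence is the uniqueness argument for $\Psi$. The paper observes that if $w\cdot\gamma=py$ and $w'\cdot\gamma=py'$, then, since $\gamma\in W_p\cdot 0$ and the $W_p$-action on $W_p\cdot 0$ is simply transitive (as $0$ lies in the interior of an alcove under the standing hypothesis $p>h$), the elements $w^{-1}t_{py}$ and $w'^{-1}t_{py'}$ of $W_p$ coincide, forcing $w=w'$ and $y=y'$ by the semidirect product decomposition $W_p=W\ltimes p\mathbb{Y}$. Your argument instead computes directly that $v\rho-\rho\in p\mathbb{Y}$ with small coroot pairings forces $v=1$; this is equally valid, though it invokes the stronger bound $p>2h-2$ at a step where the paper needs only $p>h$ (the stronger bound is of course required anyway for part~2 of the lemma). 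Either way the argument goes through.
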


\begin{proof}
We will use Lemma \ref{lem6} and also quote the claim in its proof.  If
$\nu\in\Lambda_{m}\cap p\mathbb{Y},$ then part (1) of Lemma \ref{lem6}
implies $W\cdot\nu\cap \mathbb{X}^{+}\subseteq\Gamma_{m}.$ In addition, the claim in
the proof shows there is only one $v\in W$ with $v\cdot\nu\in \mathbb{X}^{+}.$ Thus
$\nu\mapsto v\cdot\nu$ gives a well-defined map $\Lambda_{m}\cap p\mathbb{Y}\rightarrow
\Gamma_{m}$.  

 Next, if we start with a $\gamma\in\Gamma_{m}$, then part (2)
implies $W\cdot\gamma\cap p\mathbb{Y\subseteq}\Lambda_{m}\cap p\mathbb{Y}$. If
we have $w\cdot\gamma=py$ and $w^{\prime}\cdot\gamma=py^{\prime}$ for some
$w,w^{\prime}\in W$ and $y,y^{\prime}\in \mathbb{Y}$, then regularity of the action of
$W_{p}$ on $W_{p}\cdot0$ forces $w=w^{\prime}$ and $y=y^{\prime}$. In
particular, the assignment $\gamma\mapsto\nu=w\cdot\gamma=py$ gives a
well-defined map $\Gamma_{m}\rightarrow\Lambda_{m}\cap p\mathbb{Y}$.

 Since $\gamma=w^{-1}\cdot\nu$, the construction above of the map $\Lambda_{m}\cap p\mathbb{Y}\rightarrow
\Gamma_{m}$ shows it sends $\nu$ to $\gamma$.

That is, the map $\Gamma_{m}\rightarrow\Lambda_{m}\cap p\mathbb{Y}\rightarrow
\Gamma_{m}$ is the identity.  Similarly, 
 the composite $\Lambda_{m}\cap p\mathbb{Y}\rightarrow
\Gamma_{m}\rightarrow\Lambda_{m}\cap p\mathbb{Y}$ is also the identity, and
the proof is complete.
\end{proof}

\begin{remark}
So far, we have not used Theorem 1 in this section. 
It is easy to deduce, either from Theorem 1 or from  the above three results of this section, that the functor
$\RInd_{B}^{G}\,$ induces an isomorphism of Grothendieck groups
\[
K_{0}(D_{triv}(B))\overset {\sim} {\longrightarrow}{K_{0}(D^{b}(block(G))}\text{.}
\]
This isomorphism may be regarded as a ``shadow" of Theorem 1, though
with a more restrictive bound $p>2h-2$ on $p$. \end{remark}  

We do not know a proof of Theorem 1 based on the isomorphism of Grothendieck groups above, even with the stricter bound, though it remains natural to look for
such an argument.   Instead, we now quote Theorem 1, together with the lemma
above, $\bf{to~prove~Theorem~2}$ , our main result.

\begin{proof}
By part (1) of Proposition \ref{prop5}
and part (1) of Lemma \ref{lem6}, 
 if
$N\in D_{triv}(B)$ is isomorphic to an object in $D_{triv}(Dist(B)_{\Lambda
_{m}})\subseteq D_{triv}(B)$, then $\mbox{RInd}_{B}^{G}\,N$ is isomorphic to
an object in $D^{b,f}(Dist(G)_{\Gamma_{m}})\subseteq D^{b}(block(G))$. Also,
by part (2), the full triangulated subcategory generated by all objects
$\mbox{RInd}_{B}^{G}\,N$ contains an isomorphic copy of each object in
$D^{b,f}(Dist(G)_{\Gamma_{m}})$. However, since $\mbox{RInd}_{B}^{G}\,:$
$D_{triv}(B)\rightarrow D^{b}(block(G))$ $\ $is an equivalence, the
collection\ \ $\mathcal{E}_{m}$ of all these objects $\mbox{RInd}_{B}^{G}\,N$
is already a full triangulated subcategory of $D^{b}(block(G))$, equivalent to
$D_{triv}(Dist(B)_{\Lambda_{m}})$. Since $\mathcal{E}_{m}\subseteq
D^{b,f}(Dist(G)_{\Gamma_{m}})$, up to isomorphism of objects, and every object
of $D^{b,f}(Dist(G)_{\Gamma_{m}})$ is isomorphic to an object in
$\mathcal{E}_{m},$ the inclusion of $\mathcal{E}_{m}$ in the full subcategory
$\mathcal{F}_{m}$ of objects in $D^{b}(block(G))$ isomorphic to an object in
(the image in $D^{b}(block(G))$) of $D^{b,f}(Dist(G)_{\Gamma_{m}})$ is an
equivalence of triangulated categories. So is the natural functor
$D^{b,f}(Dist(G)_{\Gamma_{m}})$ $\rightarrow\mathcal{F}_{m}.$ \ To summarize,
the functor $\mbox{RInd}_{B}^{G}$ directly induces an equivalence
$D_{triv}(Dist(B)_{\Lambda_{m}})\rightarrow\mathcal{E}_{m}$, while the
latter triangulated category is equivalent to $D^{b,f}(Dist(G)_{\Gamma_{m}})$
through the composite of the equivalence $\mathcal{E}_{m}\subseteq
\mathcal{F}_{m}$ and an inverse for the equivalence $D^{b,f}(Dist(G)_{\Gamma
_{m}})$ $\rightarrow\mathcal{F}_{m}$. \ This proves the first assertion of Theorem 2.   

The second assertion, regarding directed unions, follows from general
derived category ``recollement" considerations in highest weight
category theory \cite{CPS88}, together with the obvious facts that
\[
\mathbb{Y=}%
{\textstyle\bigcup\limits_{m>0}}
\Lambda_{m}\text{ and }W_{p}\cdot0=%
{\textstyle\bigcup\limits_{m>0}}
\Gamma_{m}%
\]
with the subscripts $m$ always taken to be positive integers. This
completes the proof of Theorem 2. 
\end{proof}

\section{Appendix}

 The result stated below is \cite[Lem. 3.2]{HKS16} in the algebraic groups case.
Thus $G$ is a semisimple and simply connected algebraic group over an
algebraically closed field of positive characteristic $p$. The paper \cite{HKS16} makes a blanket assumption
that $p>h$, though it appears $p\geq h$ is sufficient for this lemma.
  
 We continue  with details of the lemma's formulation, assuming $p\geq h$. This condition implies
that the weight $0$ is $p$-regular, and that the principal block, denoted $block(G)$, has as
its irreducible modules those with highest weights (dominant and) in the orbit $W_{aff}\cdot 0$.
 See  $\S2.1$ for notation.  We also denote here $(w\cdot 0)^{y}:=wy\cdot 0$, for any $y,w\in W_{aff}$.
Finally, just as in Rickard's paper \cite{R94}, we need appropriate ``translation'' functors.
See \cite{Ja03}, especially \cite[II, Lem. 7.6]{Ja03}.
Let $\Xi_\alpha:block(G)\longrightarrow block(G)$ denote
the composition of  two translation functors, first ``to the wall'' labeled
by a simple reflection $s_\alpha$ and, then, back ``out from
the wall''. There are canonical adjunction morphisms
$f:id\longrightarrow\Xi_\alpha$,and $g:\Xi_\alpha\longrightarrow
id$. The mapping cone $C(f)$ of $f$
gives rise to a triangulated functor from $D^{b}(block(G))$ to
itself, denoted $\theta^{+}_\alpha$. A similar construction (using a shifted mapping cone
$C(g)[-1]$) gives a functor $\theta^{-}_\alpha$.

The result also has a natural formulation identical to \cite[Lem. 4.1.1(i)]{ABG04} in the quantum case, on which
the statement in the algebraic groups case was modeled. The version \cite[Lem. 3.2]{HKS16} treats
both cases simultaneously.

\begin{lemma}\label{wallcrossingfunctors} With the notation discussed above, we have \\

(i) In $D^{b}(block(G))$ we have the following canonical
isopmorphisms $ \theta_\alpha^+\circ \theta_\alpha^- \cong id ~~
{\rm and }~~\theta_\alpha^-\circ \theta_\alpha^+ \cong id$. In
particular $\theta _\alpha ^+$ and $\theta_\alpha^-$ are
autoequivalences.

(ii) If $\lambda \in W_{aff} \cdot 0$ and $\lambda
^{s_\alpha}>\lambda$ then $\theta ^+_\alpha(\RInd_B^G\lambda)\cong
\RInd_B^G (\lambda^{s_\alpha})$.
\end{lemma}

 The proof of part (i) may be obtained from
the argument for \cite[Thm. 2.1]{R94}, or, alternately, from the
argument for \cite[Lem. 4.1.1(i)]{ABG04}. (Both arguments involve
similar ingredients.) The proof of part (ii) is given in
appendices A and B of \cite{HKS16} by an argument treating both the algebraic groups
and quantum cases simultaneously.

\medskip 
 In the algebraic groups case, Rickard's result
\cite[Thm. 2.1]{R94} is quite similar to part (i) of Lemma \ref{wallcrossingfunctors} above, though his result contains
additionally an explicit description of the effect of his derived equivalences on characters.  (Also, it is formulated for reductive,
rather than semisimple groups, and  for all regular weights, not just $W_{aff}\cdot 0$. However, translation functor arguments
render the two contexts essentially equivalent.)  We view  part (ii) of Lemma 9 as giving  
further information about  the derived equivalences of part (i), with an explicit character isometry,
agreeing with that in \cite[Thm. 2.1]{R94}, obtainable by taking Euler characteristics.

\section{Acknowledgements}

   This research was supported in part by National Science Foundation grant DMS-1001900 and Simons Foundation Collaborative
Research award $\#$359363. The authors thank the referee for a careful reading of the paper and helpful remarks.

\end{document}